\def\c{\gamma}
\def\e{\varepsilon}
\def\l{\lambda}
\def\p{\pi}
\newcommand{\V}[1]{ \mathbf{#1} }    
\newcommand{\Vo}{\V{0}}
\newcommand{\Vb}{\V{b}}
\newcommand{\Vf}{\V{f}}
\newcommand{\Vr}{\V{r}}
\newcommand{\Vu}{\V{u}}
\newcommand{\Vx}{\V{x}}
\newcommand{\Vrho}{\boldsymbol{\rho}}
\newcommand{\Vxi}{\boldsymbol{\xi}}
\newcommand{\Vzero}{\boldsymbol{0}}
\newcommand{\M}[1]{ \mathbf{#1} }  
\newcommand{\MA}{\M{A}}
\newcommand{\MB}{\M{B}}
\newcommand{\MD}{\M{D}}
\newcommand{\ME}{\M{E}}
\newcommand{\MF}{\M{F}}
\newcommand{\MI}{\M{I}}
\newcommand{\MJ}{\M{J}}
\newcommand{\MK}{\M{K}}
\newcommand{\MP}{\M{P}}
\newcommand{\MU}{\M{U}}
\newcommand{\MV}{\M{V}}
\newcommand{\MSigma}{\boldsymbol{\Sigma}}
\newcommand{\cK}{{\mathcal K}} 
\newcommand{\Cmn}[2]{\mathbb{C}^{#1 \times #2}}
\def\Cp{\mathbb{C}}
\def\2nm#1{\|#1\|_2}
\def\1nm#1{\|#1\|_1}
\newcommand{\ars}[1]{\left[ \begin{array}{#1}}
\newcommand{\are}{\end{array} \right] }
\newcommand{\oars}[1]{\begin{array}{#1}}
\newcommand{\oare}{\end{array}}
\newcommand{\rars}[1]{\left( \begin{array}{#1}}
\newcommand{\rare}{\end{array} \right) }
\newcommand{\eqs}{\begin{eqnarray}}
\newcommand{\eqe}{\end{eqnarray}}
\newcommand{\eqsn}{\begin{eqnarray*}}
\newcommand{\eqen}{\end{eqnarray*}}
\def\defs{\begin{definition}}
\def\defe{\end{definition}}
\def\teos{\begin{theorem}}
\def\teoe{\end{theorem}}
\def\prfs{\begin{proof}}
\def\prfe{\end{proof}}
\def\exas{\begin{exampl}}
\def\exae{\end{exampl}}
\def\excs{\begin{exercise}}
\def\exce{\end{exercise}}
\def\cors{\begin{corollary}}
\def\core{\end{corollary}}
\newcommand{\ens}{\begin{enumerate}}
\newcommand{\ene}{\end{enumerate}}
\newcommand{\its}{\begin{itemize}}
\newcommand{\ite}{\end{itemize}}
\newcommand{\des}{\begin{description}}
\newcommand{\dee}{\end{description}}
\begin{document}

\newcommand{\AKC}[1]{{\color{blue} Arielle Notes: #1}}
\newcommand{\EdS}[1]{{\color{green} EdS: #1}}

\newcommand{\MPE}[1]{{\color{red} #1}}


\TitleLanguage[EN]
\title[The short title]{Analysis of GMRES for Low-Rank and Small-Norm Perturbations of the Identity Matrix}

\author{\firstname{Arielle K.} \lastname{Carr}\inst{1,}%
\footnote{Corresponding author: \ElectronicMail{arg318@lehigh.edu}}} 
\address[\inst{1}]{\CountryCode[DE]Department of Computer Science and Engineering, Lehigh University, Bethlehem, Pennsylvania, USA }
\author{\firstname{Eric} \lastname{de Sturler}\inst{2,}%
     \footnote{sturler@vt.edu}}
\address[\inst{2}]{\CountryCode[FR]Department of Mathematics, Virginia Tech, Blacksburg, Virginia, USA}
\author{\firstname{Mark} \lastname{Embree}\inst{2,}%
    \footnote{embree@vt.edu}}
\AbstractLanguage[EN]
\begin{abstract}
In many applications, linear systems arise where the
coefficient matrix takes the special form 
$\MI + \MK + \ME$, where $\MI$ is the identity matrix of
dimension $n$, ${\rm rank}(\MK) = p \ll n$, and 
$\|\ME\| \leq \epsilon < 1$. GMRES convergence rates for
linear systems with coefficient matrices of the forms 
$\MI + \MK$ and $\MI + \ME$ are guaranteed by well-known
theory, but only relatively weak convergence bounds
specific to matrices of the form $\MI + \MK + \ME$
currently exist. In this paper, we explore the
convergence properties of linear systems with such coefficient
matrices by considering the pseudospectrum of
$\MI + \MK$.\ \ We derive a bound for the GMRES residual in
terms of $\epsilon$ when approximately solving the linear
system $(\MI + \MK + \ME)\Vx = \Vb$ and identify the eigenvalues
of $\MI + \MK$ that are sensitive to perturbation. In
particular, while a clustered spectrum away from the origin
is often a good indicator of fast GMRES convergence, that
convergence may be slow when some of those eigenvalues are
ill-conditioned. We show there can be at most $2p$
eigenvalues of $\MI + \MK$ that are sensitive to
small perturbations. We present numerical results when using GMRES to solve a sequence of linear systems of the form 
$(\MI + \MK_j + \ME_j)\Vx_j = \Vb_j$
 that arise from the application of Broyden's method to solve a nonlinear partial differential equation.
\end{abstract}
\maketitle                   

\section{Introduction}
 
In this paper, we consider the convergence of iterative solvers for linear systems with coefficient matrices of the form
\eqs\label{eq:IKE}
\MI + \MK + \ME \in \Cmn{n}{n} ,
\eqe
where $\MI$ is the identity matrix,
$\MK$ is {\it low rank} (i.e., ${\rm rank}{(\MK)} = p \ll n$),
and $\ME$ is {\it small-in-norm} (i.e., 
$\2nm{\ME}= \epsilon < 1$). Sequences of matrices of
the special form (\ref{eq:IKE})
arise in many applications. 
As an example, we consider a preconditioned nonlinear partial differential equation (PDE) solved using Broyden's method \cite{Broy1965,Kelley2003}. 
Linear systems of this type also arise in methods where a highly effective initial preconditioner gets updated using a sequence of rank-one or low-rank updates \cite{AhuStu-etal_2011,BergBru_2006}.
We study the convergence of GMRES \cite{SaadSchu86} applied to linear systems of the form $(\MI+\MK+\ME)\Vx = \Vb$. 
At iteration $m$, GMRES approximates the solution to $\MA\Vx=\Vb$ with the estimate $\Vx_m \in \cK^m(\MA; \Vr_0) = {\rm span}\{\Vr_0, \MA\Vr_0,\dots,\MA^{m-1}\Vr_0\}$ that minimizes the 2-norm of the residual $\Vr_m = \Vb-\MA\Vx_m$.   
To start, consider the following two well-known convergence bounds with $\MI$, $\MK$, and $\ME$ defined as above (for the proofs, see \cite{CampIpse96,GmatPhil08} and
\cite{EhrDeu1997}, respectively).
\teos\label{teo:smallNorm}
Let $\MA = \MI + \ME$, 
Then $\2nm{\Vr_m} \leq \epsilon^m\2nm{\Vr_0}$.
\teoe
\teos\label{teo:lowRank}
Let $\MA = \MI + \MK$. 
Then GMRES will converge in 
at most $p+1$ iterations: $\2nm{\Vr_{p+1}}=0$.
\teoe

In practice, $n$ is often very large.  
Given a prescribed convergence tolerance $\tau>0$, we consider
convergence to be fast when $\2nm{\Vr_m}\leq \tau$ for $m \ll n$.
Clearly,  the bound in Theorem \ref{teo:smallNorm}
guarantees faster convergence for smaller $\epsilon$,
though $\epsilon$ need not be very small to ensure rapid GMRES convergence (consider, for example, $\epsilon = 1/3$). 
Further, as we are interested in cases where
$p \ll n$, Theorem \ref{teo:lowRank} guarantees very
fast convergence. To our knowledge, only
relatively weak bounds are currently known for the
convergence of GMRES when solving linear
systems with coefficient matrices of the form 
(\ref{eq:IKE}).
In \cite{KelKev2004}, for ${\rm rank}(\MK)=p$, it is shown that there exists $\mathcal{C} > 0$ such that (for 
GMRES) 
$\2nm{\Vr_{(p+1)M}} \leq \mathcal{C}^M\2nm{\ME}^M\2nm{\Vr_0}$,
for $M>0$. In particular, one can write the (scaled) minimal polynomial for $\MI+\MK$ in the form $\mu(z) = 1 + \sum_{k = 1}^{p+1}\beta_k z^k$, and take
\begin{equation} \label{eq:KKQ}
\mathcal{C} = \displaystyle\sum_{k = 1}^{q+1} k|\beta_k|\Big\Vert\MI+\MK\Big\Vert_2^{k-1}+O\left(\Big\Vert\ME\Big\Vert_2^2\right).
\end{equation}
Further, the empirical observation is made that
when $\2nm{\ME} \ll \tau$, GMRES tends 
to converge in $p+1$ iterations~\cite{KelKev2004}.  However, this condition severely
limits the size of the perturbation to $\MI+\MK$ we can
consider. 

This paper is outlined as follows. In Section \ref{sec:pseudo}, we consider the
pseudospectrum of $\MI+\MK$ and derive a straightforward bound for the GMRES residual
in terms of $\epsilon$.  In Section \ref{sec:Sources}, we identify those
eigenvalues of $\MI+\MK$ that are potentially sensitive to perturbation (i.e., the
introduction of $\ME$). 
In Section \ref{sec:results}, we study a sequence of linear systems with coefficient matrices of type~(\ref{eq:IKE}) arising from the solution of a nonlinear PDE using Broyden's method and GMRES. Finally,
in Section \ref{sec:conclusion}, we provide conclusions and future work.  All plots of pseudospectra were computed using
EigTool~\cite{Wri2002}. 
 
\section{Pseudospectral GMRES Bounds for Perturbed Matrices}\label{sec:pseudo}
In our analysis, rather than directly considering coefficient matrices of the form (\ref{eq:IKE}), we examine how $\2nm \ME$ affects the 
eigenvalues of $\MA+\ME$, where 
\eqs\label{eq:Amat}
\MA =\MI + \MK.
\eqe
Using spectral perturbation theory applied
to the resolvents of a general coefficient matrix, $\MA$, and
the perturbation, $\MA + \ME$, it is shown in \cite{SifuEmbr13} that the norm of the GMRES residual
when solving the unperturbed linear system (i.e., $\MA\Vx=\Vb$) can only increase by at most $O(\epsilon)$, regardless of
the magnitude of the change to the eigenvalues of $\MA$, provided that $\|\ME\|_2 < 1/\|\MA^{-1}\|_2$~\cite{SifuEmbr13}.  Here we specialize this result for the case when the coefficient matrix takes the particular form~(\ref{eq:Amat}).

The $\delta$-pseudospectrum of the matrix $\MA\in\Cmn nn$ can be defined in the equivalent ways (see, e.g., \cite[chap.~2]{TE05}):
\eqs\label{eq:dPseudo}
\sigma_\delta(\MA) = \left\{z\in \Cp \mid \|(z\MI - \MA)^{-1}\|_2 > 1/\delta\right\}
\ =\ 
\left\{z \in \sigma(\MA+\ME) \mbox{ for some $\ME\in\Cmn nn$ with $\|\ME\|_2 < \delta$}\right\},
\eqe
where $\sigma(\cdot)$ denotes the spectrum (set of eigenvalues).
Note that $\sigma(\MA)$ is contained in
$\sigma_\delta(\MA)$ for all $\delta > 0$ and the boundary
$\partial\sigma_\delta(\MA)$ encloses a region containing
all eigenvalues of $\MA$.\ \  When $\delta > \epsilon$, we can further conclude that
$\partial\sigma_\delta(\MA)$ encloses the region containing
all eigenvalues of $\MA+\ME$ \cite{SifuEmbr13}. Let $\delta_0>0$ denote the smallest value for which $\partial\sigma_{\delta_0}(\MA)$ passes through the origin; then $\sigma_{\delta_0}(\MA)$ is the largest $\delta$-pseudospectrum of $\MA$ that does not contain the origin.
Substituting (\ref{eq:Amat}) into (\ref{eq:dPseudo}) gives
\eqsn
\sigma_\delta(\MI +\MK) = \left\{z \in \Cp ~ \mid ~ \|((z-1)\MI-\MK)^{-1}\|_2 > 1/\delta\right\},
\eqen
and hence $\sigma_\delta(\MI+\MK) = 1 + \sigma_\delta(\MK)$.  
In Section \ref{sec:Sources} we identify precisely those eigenvalues of $\MI+\MK$ that are sensitive to small-in-norm perturbations.

Let $\boldsymbol{\rho}_m$ denote the residual at the $m$th iteration of (full) GMRES applied to $(\MI+\MK)\Vx = \Vb$,
and $\Vr_m$ denote the analogous quantity for GMRES applied to $(\MI+\MK+\ME)\Vx = \Vb$ (both with the same $\Vx_0$).
Applying \cite[Corollary 2.2]{SifuEmbr13} to $\MA = \MI+\MK$ gives, for all $\delta > \epsilon = \|\ME\|_2$, 
\eqs \label{eq:sifbound}
    \2nm{\Vr_m} \leq \2nm{{\boldsymbol{\rho}}_m} + \epsilon\, C_m(\delta),
\eqe 
where
\eqs \label{eq:Cm}
   C_m(\delta) = \displaystyle\left(\frac{L_\delta\2nm{\Vb}}{\pi\delta^2}\right)\sup_{z\in \sigma_\delta(\MA)}|\psi_m(z)|.
\eqe 
Here $L_\delta$ denotes the arc length of
$\partial\sigma_\delta(\MI+\MK)$, and
$\psi_m(z)$ denotes the residual polynomial at the $m$th  iteration of GMRES applied to
$(\MI+\MK)\Vx=\Vb$. 
Theorem~\ref{teo:lowRank} guarantees that 
$\2nm{{\boldsymbol{\rho}}_{p+1}} = 0$, and hence for any $m\ge p+1$ the bound in (\ref{eq:sifbound}) becomes
\eqs \label{eq:bound}
    \2nm{\Vr_{m}} < \epsilon\, C_{m}(\delta), \qquad m\ge p+1.
\eqe 
This bound provides a concrete alternative to the asymptotic expression in~(\ref{eq:KKQ}), and formalizes an empirical observation made in \cite{KelKev2004}: When 
$\epsilon < \tau/C_{p+1}(\delta)$, we can guarantee our
perturbed system will converge in $p+1$ steps. Of course this last observation still limits
the size of $\2nm{\ME}$ we can consider.  However, the analysis in~(\ref{eq:bound}) provides a mechanism for bounding GMRES convergence for $\MI+\MK+\ME$ at iterations \emph{beyond $m=p+1$}.  At iterations $m$ beyond the point at which GMRES has exactly converged for the $\MI+\MK$ system, the residual polynomial $\psi_m(z)$ can be taken to be any polynomial for which $\psi_m(0)=1$ and $\psi_m(\MI+\MK)\Vrho_0 = \Vzero$ (an idea employed by Ymbert to analyze a different situation in which GMRES converges quickly for the unperturbed system~\cite{Ymb11}).  Here we simply bound $\2nm{\Vr_{(p+1)M}}$ using $\psi_{(p+1)M}(z) = \psi_{p+1}(z)^M$, and the bound~\eqref{eq:bound} gives convergence when $\epsilon < \tau/C_{(p+1)M}(\delta)$.

As a simple example, consider a matrix of the form (\ref{eq:Amat}), with $n = 25$, rank$(\MK) = 2$, and three eigenvalues: $\lambda_1=1$ of multiplicity~23, and simple eigenvalues $\lambda_2 \approx 1.25$ and $\lambda_3 \approx 12.25$.
Figure
\ref{fig:Cp1eig} shows pseudospectra of this $\MI+\MK$, and Figure \ref{fig:Cp1bound} shows $\2nm{\Vr_m}$,
$\2nm{\boldsymbol{\rho}_m}$ (for 100 random perturbations $\ME$), and the upper bound
(\ref{eq:bound}) on $\2nm{\Vr_m}$ for different $\delta$.
We took perturbations of sizes $\epsilon = 10^{-3.5}$ and
$\epsilon = 10^{-5}$, and used several values of 
$\delta \in (\epsilon,1/\2nm{\MA^{-1}})$. The right-hand side $\Vb$ is fixed throughout, generated by MATLAB's {\tt randn} command and normalized to be a unit vector.
The unperturbed system converges to tolerance $\tau = 10^{-10}$
in three iterations, as guaranteed by Theorem
\ref{teo:lowRank}.  The perturbation causes a slight delay, with GMRES converging to $\tau = 10^{-10}$ in four iterations for $\2nm{\ME} = 10^{-5}$, and five iterations for $\2nm{\ME} = 10^{-3.5}$.

We estimate the bound~\eqref{eq:bound} for $\delta \le 10^{-2}$ by computing the three components of $\sigma_\delta(\MI+\MK)$ in EigTool~\cite{Wri2002} (reducing the axes to appropriate scale as $\delta$ decreases).  From this data MATLAB's contour plotting routine approximates the boundary $\partial \sigma_\delta(\MI+\MK)$ as discrete points that determine three closed curves, from which we estimate the boundary length $L_\delta$.  The $\sup$ term in~\eqref{eq:bound} is estimated by taking the maximum value of $|\psi_m(z)|$ for $z$ among the contour points that approximate  $\partial \sigma_\delta(\MI+\MK)$.

This process results in the estimated bounds shown in Figure~\ref{fig:Cp1bound}.  For reference, $C_3(10^{-3}) \approx 256.7$, $C_6(10^{-3}) \approx 2.52$, and $C_9(10^{-3}) \approx 0.025$.  These values are multiplied by $\epsilon=\|\ME\|_2$, so, for example, when $\epsilon = 10^{-5}$ and $\delta=10^{-3}$, \eqref{eq:bound} ensures convergence to $\|\Vr_3\|_2<3\cdot 10^{-3}$, $\|\Vr_6\|_2<3\cdot 10^{-5}$, and $\|\Vr_9\|_2 < 3\cdot 10^{-7}$.  While  Figure~\ref{fig:Cp1bound} suggests that these bounds can be somewhat pessimistic, they will capture eventual convergence.
Note that while $\delta$ must be chosen such that 
$\delta > \epsilon$, \eqref{eq:Cm}~itself does not depend on $\epsilon$. Thus very small perturbations of
(\ref{eq:Amat}) will give small values of $\epsilon\, C_m$.  

\begin{figure}[!t]
\begin{center}
\includegraphics[height=2
in]{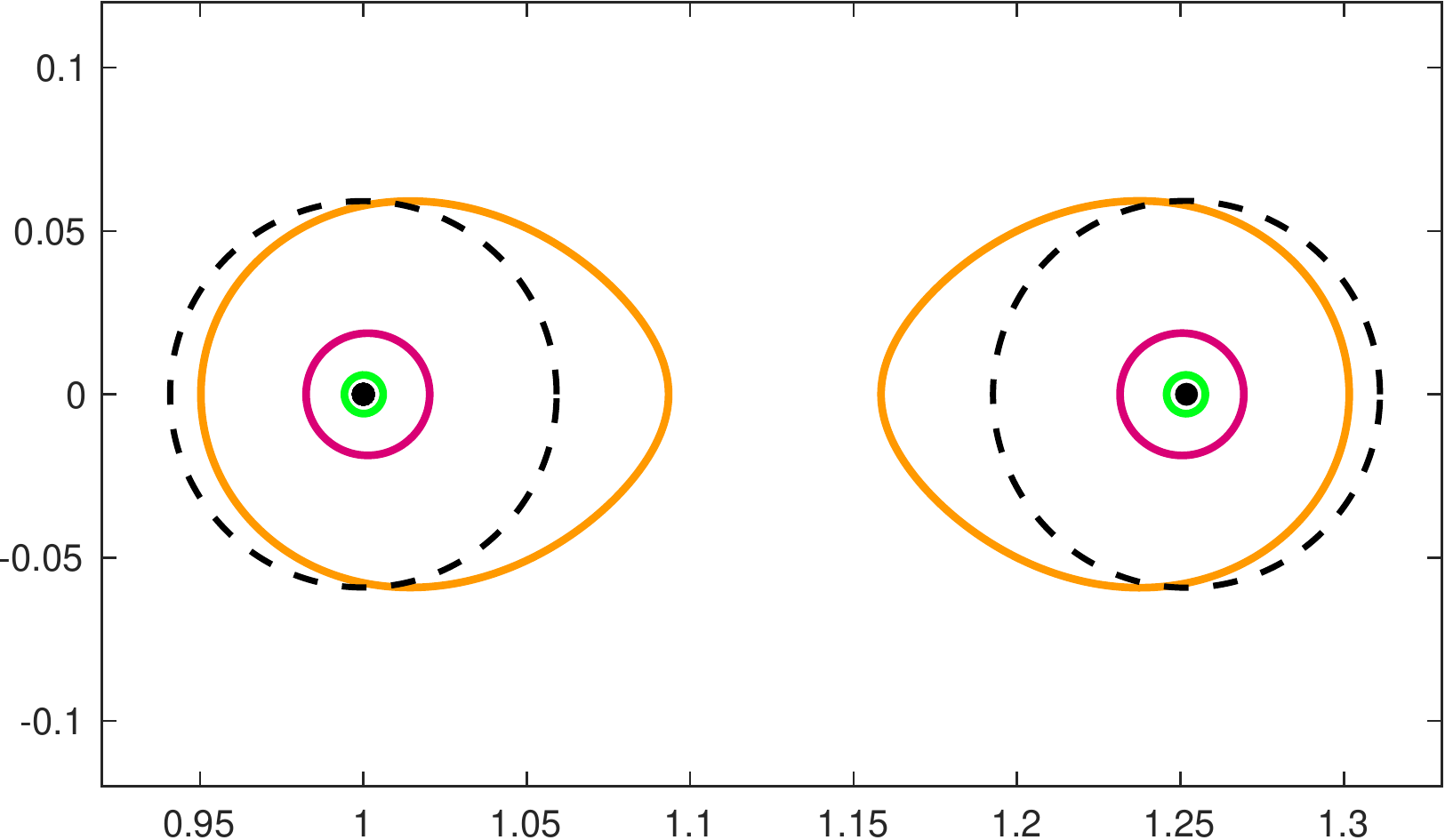}
\qquad
\includegraphics[height=2in]{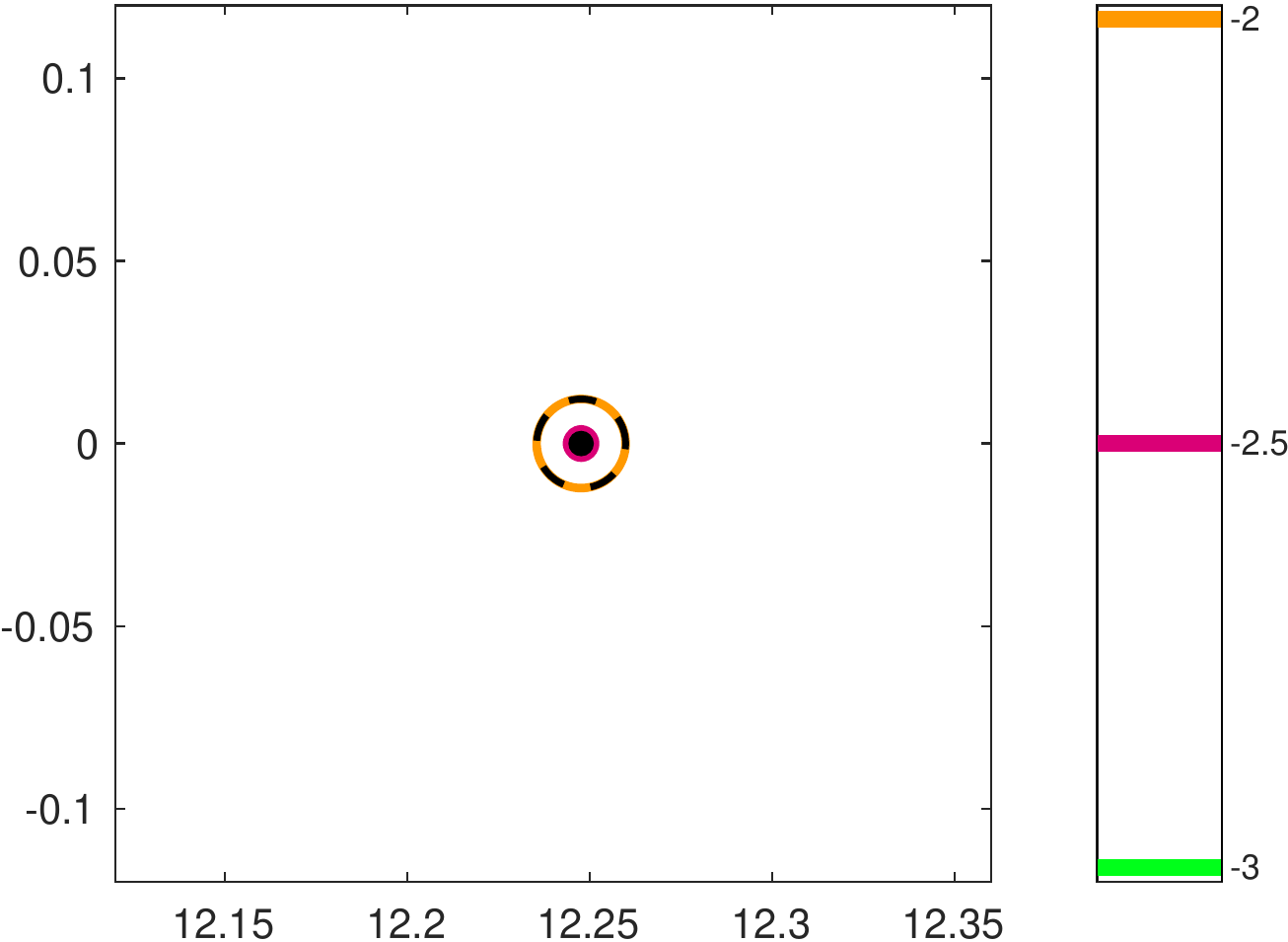}
\end{center}
\caption{Boundaries of $\sigma_\delta(A)$ for $\delta = 10^{-2}, 10^{-2.5}, 10^{-3}$ for an example with $n=25$ and ${\rm rank}({\bf K})=2$.  (The color bar on the right shows $\log_{10}(\delta)$.)
The black dots show the three eigenvalues of ${\bf I} + {\bf K}$.\ \ 
The dashed black curves show the asymptotic sets $\{\lambda_j + \kappa_j \delta  {\rm e}^{{\rm i} \theta}: \theta\in[0,2\pi)\}$ for $\delta=10^{-2}$ .  Even for this relatively large value of $\delta$, the agreement with the boundary of $\sigma_\delta(A)$ is apparent.
\label{fig:Cp1eig}
}
\end{figure}

\begin{figure}[t!]
\vspace*{1em}
\begin{center}
\includegraphics[height=2in]{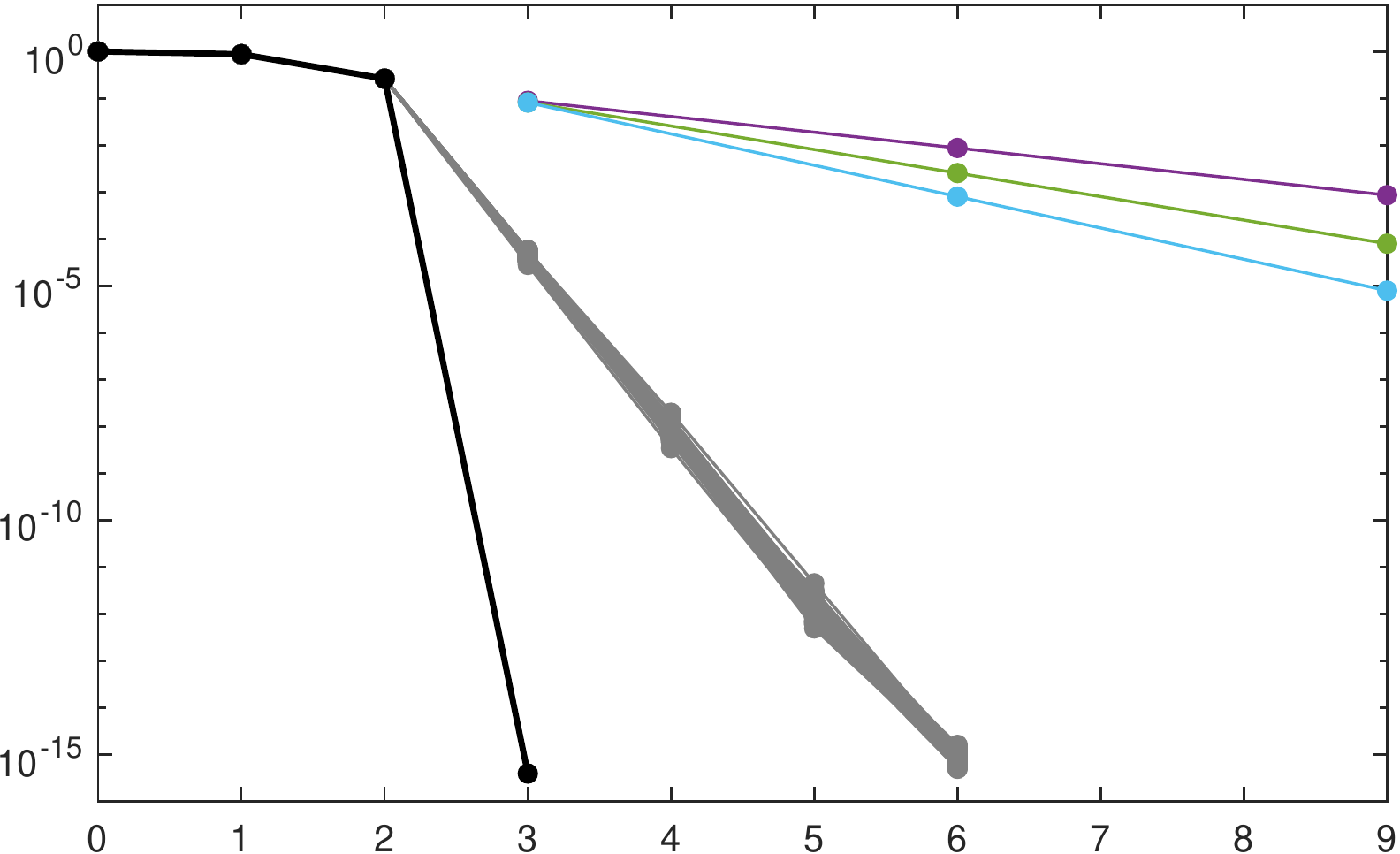}
\begin{picture}(0,0)
\put(-135,-15){iteration, $m$}
\put(-140,150){$\|{\bf E}\|=10^{-3.5}$}
\put(-50, 120){\rotatebox{-6}{$\delta=10^{-2}$}}
\put(-50, 94){\rotatebox{-14}{$\delta=10^{-3}$}}
\put(-177,17){\footnotesize $\|\boldsymbol{\rho}_m\|$}
\put(-70,17){\footnotesize $\|\boldsymbol{r}_m\|$}
\end{picture}
\includegraphics[height=2in]{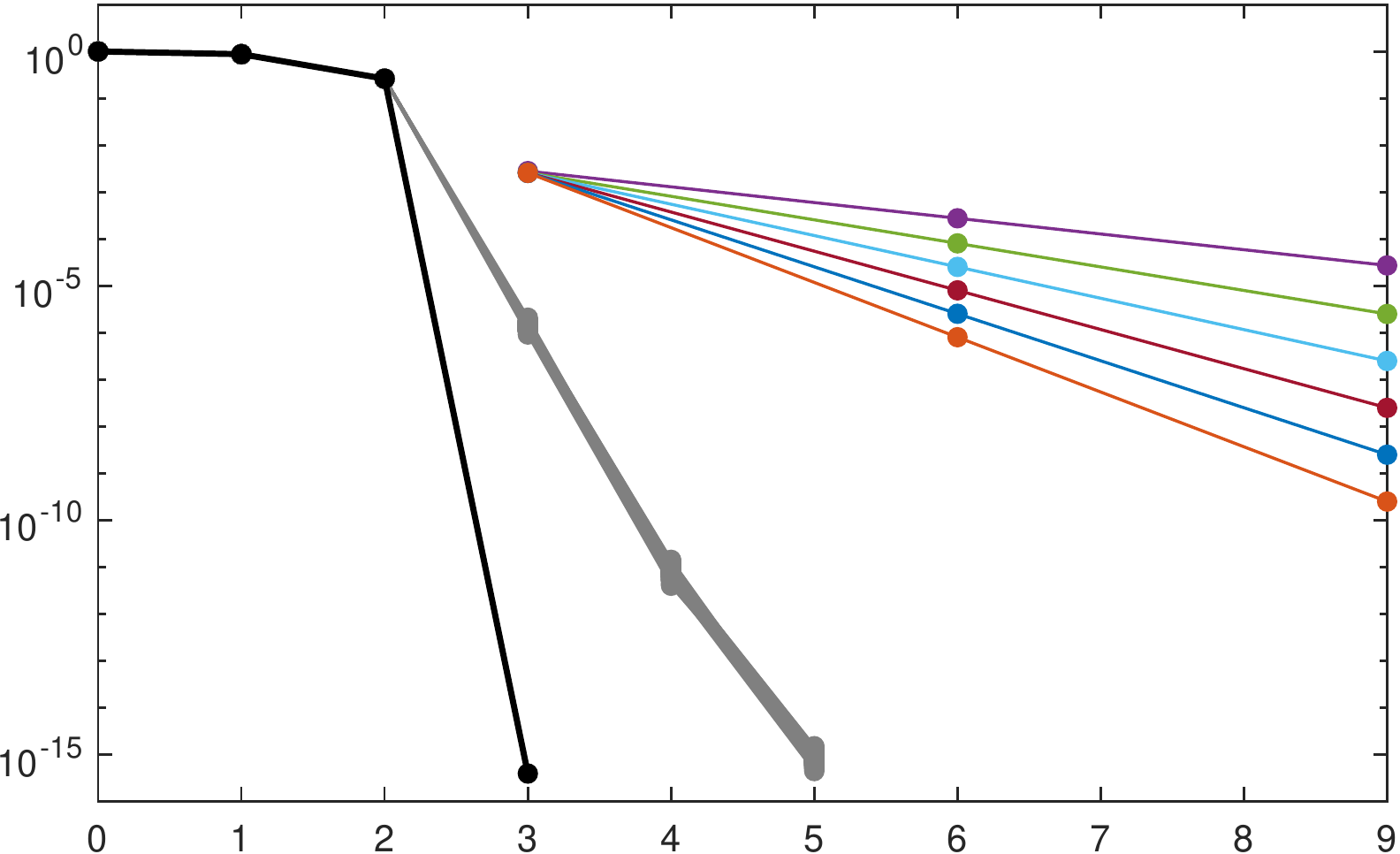}
\begin{picture}(0,0)
\put(-135,-15){iteration, $m$}
\put(-145,150){$\|{\bf E}\|=10^{-5}$}
\put(-54, 110){\rotatebox{-6}{$\delta=10^{-2}$}}
\put(-54, 66){\rotatebox{-22}{$\delta=10^{-4.5}$}}
\put(-177,17){\footnotesize $\|\boldsymbol{\rho}_m\|$}
\put(-94,17){\footnotesize $\|\boldsymbol{r}_m\|$}
\end{picture}
\end{center}

\vspace*{1em}

\caption{GMRES convergence for $\MI+\MK$ (black line) and for $\MI+\MK+\ME$ (gray lines, for 100 random perturbations $\ME$).  The colored lines show the upper bound (\ref{eq:bound}) for iterations $m=3$, $6$, and $9$, for $\log_{10}(\delta) = -2, -2.5, -3$ for $\|\ME\| = 10^{-3.5}$  (left) and 
$\log_{10}(\delta) = -2, -2.5, \ldots, -4.5$ for $\|\ME\|=10^{-5}$ (right).}
\label{fig:Cp1bound}
\end{figure}

Following~\cite{Ymb11} and assuming $\MI+\MK$ is diagonalizable, we can approximate the bounds in terms of the \emph{condition numbers of the eigenvalues} (i.e., the norms of the spectral projectors associated with each distinct eigenvalue; for simple eigenvalues, the secant of the angle between left and right eigenvectors); see, e.g.,~\cite{GV12,TE05}.
Let $\kappa_j\ge 1$ denote the condition number of $\lambda_j$, $j=1,\ldots, p+1$.  
Then the asymptotic behavior of $(z\MI-(\MI+\MK))^{-1}$ as $z\to\lambda_j$ implies that $\sigma_\delta(\MI+\MK)$ behaves, as $\delta\to0$, like the union of $p+1$ disks, each centered at an eigenvalue $\lambda_j$ and having radius $\kappa_j\delta$; see \cite[chap.~52]{TE05}.
(For the example in Figure~\ref{fig:Cp1eig}, we compare these asymptotic sets with the true pseudospectra for $\delta=10^{-2}$.)
For small $\delta$, we can thus estimate $L_\delta = 2\pi\delta(\kappa_1+\cdots+\kappa_{p+1})$ and hence, as $\delta \to 0$,
\[    \2nm{\Vr_{m}} < \epsilon\, C_m(\delta) 
= \epsilon\,\left(\frac{L_\delta\2nm{\Vb}}{\pi\delta^2}\right)\sup_{z\in \sigma_\delta(\MA)}|\psi_m(z)| 
\approx \frac{2\,\epsilon\, (\kappa_1 + \cdots + \kappa_{p+1})}{\delta} \max_{\substack{j=1,\ldots, p+1 \\\theta\in[0,2\pi)}} |\psi_m(\lambda_j + \kappa_j \delta {\rm e}^{{\rm i}\theta})|.\]
Next, we seek more insight into the conditioning of the eigenvalues of $\MI+\MK$.

\section{Sources of Sensitive Eigenvalues}\label{sec:Sources}
While a clustered spectrum away from the origin is often a good
indicator of fast GMRES convergence, convergence may be slow when
some of those eigenvalues are ill-conditioned. In the last section we saw that we can instead focus on the $\delta$-pseudospectrum of $\sigma_{\delta}(\MI+\MK)$.
In fact, there can only be at most $2p$ eigenvalues of $\MI+\MK$ that
are sensitive to the introduction of $\ME$, which we formalize now.  
Let $R(\cdot)$ denote the range of an $n\times n$ matrix and
$N(\cdot)$ denote the null space.

First, consider the (full) singular value decomposition (SVD) of $\MK$ (with $\boldsymbol{\Sigma}_1$ nonsingular), written as
\eqs\label{eq:svdK}
\MK = \left[\begin{array}{c|c} \MU_1 & \MU_2 \end{array} \right]\left[\begin{array}{c|c} {\bf\Sigma}_1 & {\bf 0} \\ \hline {\bf 0} & {\bf 0} \end{array} \right]\left[\begin{array}{c} \MV_1^* \\ \hline \MV_2^* \end{array} \right] = \MU_1{\bf \Sigma}_1\MV_1^*.
\eqe 
Given $\MA$ of the form (\ref{eq:Amat}) and the SVD of $\MK$ as in
(\ref{eq:svdK}), we can study the equivalent eigenvalue problems
\eqs\label{eq:evalA} 
    \MA\Vx = \lambda\Vx \quad\Longleftrightarrow\quad \Vx + \MU_1\boldsymbol{\Sigma}_1\MV_1^*\Vx = \lambda\Vx \quad\Longleftrightarrow\quad \MU_1\boldsymbol{\Sigma}_1\MV_1^*\Vx = (\lambda-1)\Vx.
\eqe
We characterize the eigenpairs $(\lambda,\Vx)$ to identify
the source of  potentially ill-conditioned eigenvalues. We
begin by defining the eigenvectors corresponding to eigenvalue
$\lambda = 1$.
\begin{theorem}\label{teo:null}
$(1,\Vx)$ is an eigenpair of $\MA = \MI+\MU_1\boldsymbol{\Sigma}_1\MV_1^*$ 
if and only if $\Vx \in R(\MV_2)$.
\end{theorem}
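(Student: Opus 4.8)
The plan is to reduce the eigenvalue condition for $\lambda = 1$ to a null-space statement and then identify that null space with $R(\MV_2)$ using the structure of the SVD. Starting from the last equivalence in~(\ref{eq:evalA}), $(1,\Vx)$ is an eigenpair of $\MA$ precisely when $\Vx\neq\Vo$ and $\MU_1\boldsymbol{\Sigma}_1\MV_1^*\Vx = (\lambda-1)\Vx = \Vo$, i.e.\ when $\Vx\in N(\MK)$. So the claim is equivalent to $N(\MK) = R(\MV_2)$, and I would prove this inclusion in both directions.

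First I would argue $N(\MK)\subseteq R(\MV_2)$. Suppose $\MU_1\boldsymbol{\Sigma}_1\MV_1^*\Vx = \Vo$. Since $\MU_1$ has orthonormal columns it is injective on $\Cn p$, and $\boldsymbol{\Sigma}_1$ is nonsingular by hypothesis; hence $\MU_1\boldsymbol{\Sigma}_1$ is injective, which forces $\MV_1^*\Vx = \Vo$. Because $\left[\begin{array}{c|c}\MV_1 & \MV_2\end{array}\right]$ is unitary, $R(\MV_1)$ and $R(\MV_2)$ are orthogonal complements in $\Cn n$, so $\MV_1^*\Vx = \Vo$ means $\Vx\perp R(\MV_1)$, i.e.\ $\Vx\in R(\MV_2)$. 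Conversely, if $\Vx\in R(\MV_2)$ then $\MV_1^*\Vx = \Vo$ by the same orthogonality, and therefore $\MK\Vx = \MU_1\boldsymbol{\Sigma}_1\MV_1^*\Vx = \Vo$, so $\Vx\in N(\MK)$.

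It remains only to reconcile this with the requirement that an eigenvector be nonzero: since $\mathrm{rank}(\MK)=p<n$, the subspace $R(\MV_2)$ has dimension $n-p\ge 1$ and thus contains nonzero vectors, so "$(1,\Vx)$ is an eigenpair" and "$\Vx\in R(\MV_2)\setminus\{\Vo\}$" describe the same set of admissible $\Vx$; stating the result as "$\Vx\in R(\MV_2)$" is the standard convention. There is essentially no hard step here — the only point requiring a word of care is the injectivity of $\MU_1\boldsymbol{\Sigma}_1$ (which is why the hypothesis that $\boldsymbol{\Sigma}_1$ is nonsingular in~(\ref{eq:svdK}) matters) and the identification $R(\MV_1)^\perp = R(\MV_2)$ coming from unitarity of the right singular vector matrix.
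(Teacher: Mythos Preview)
Your proof is correct and follows essentially the same route as the paper's: both reduce the eigenpair condition for $\lambda=1$ to $\MU_1\boldsymbol{\Sigma}_1\MV_1^*\Vx=\Vo$, then identify $N(\MV_1^*)=R(\MV_2)$ via the unitarity of the right singular vector matrix. You spell out the injectivity of $\MU_1\boldsymbol{\Sigma}_1$ and the nonzero-eigenvector caveat more explicitly than the paper does, but the argument is the same.
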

\begin{proof}
  Let $(1,\Vx)$ be an eigenpair of $\MA = \MI+\MU_1\boldsymbol{\Sigma}_1\MV_1^*$.  Then,  
  $\Vx+\MU_1\boldsymbol{\Sigma}_1\MV_1^*\Vx = \Vx$ implies $\MU_1\boldsymbol{\Sigma}_1\MV_1^*\Vx = {\bf 0}$, and clearly $\Vx \in N(\MV_1^*) = R(\MV_2)$.\ \ Now assume $\Vzero \ne \Vx \in R(\MV_2) = N(\MV_1^*)$.\ \ Then $\MV_1^*\Vx = {\bf 0}$ and (\ref{eq:evalA}) implies $\l=1$.
\end{proof}

When $\lambda \not = 1$, (\ref{eq:evalA}) implies that the corresponding
eigenvector $\Vx \in R(\MU_1)$. In this case, we need only
analyze the eigenpairs of the small matrix
$\boldsymbol{\Sigma}_1\MV_1^*\MU_1$, which we make precise now.

\begin{theorem}\label{teo:range}
$\Vx = \MU_1\boldsymbol{ \xi}$ is an eigenvector of 
$\MA = \MI+\MU_1\boldsymbol{\Sigma_1}\MV_1^*$ if and only if $\boldsymbol{\xi}$ is an 
eigenvector of $\boldsymbol{\Sigma_1}\MV_1^*\MU_1$.
\end{theorem}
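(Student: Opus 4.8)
The plan is to exploit the block structure coming from the SVD of $\MK$. Since $\MU=[\MU_1\mid\MU_2]$ is unitary, every vector decomposes as $\Vx=\MU_1\Vgxi+\MU_2\Vgh$ for unique $\Vgxi\in\Cn p$ and $\Vgh\in\Cn{n-p}$. First I would substitute this decomposition into the eigenvalue equation in the form $\MU_1\boldsymbol{\Sigma}_1\MV_1^*\Vx=(\lambda-1)\Vx$ from (\ref{eq:evalA}). The left-hand side lies in $R(\MU_1)$, so writing it as $\MU_1\big(\boldsymbol{\Sigma}_1\MV_1^*\MU_1\Vgxi+\boldsymbol{\Sigma}_1\MV_1^*\MU_2\Vgh\big)$ and the right-hand side as $(\lambda-1)(\MU_1\Vgxi+\MU_2\Vgh)$, I would equate the $\MU_1$- and $\MU_2$-components separately (using that $\MU_1^*\MU_2=\Vzero$ and both have orthonormal columns).

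The $\MU_2$-component gives $(\lambda-1)\Vgh=\Vzero$. For an eigenvector with $\lambda\neq1$ this forces $\Vgh=\Vzero$, i.e.\ $\Vx=\MU_1\Vgxi\in R(\MU_1)$ — consistent with the remark preceding the theorem. (The case $\lambda=1$ is already covered by Theorem~\ref{teo:null}, so the present statement is really about $\lambda\neq1$; I would note that if $\Vgh\neq\Vzero$ one is back in that situation.) The $\MU_1$-component then reads $\boldsymbol{\Sigma}_1\MV_1^*\MU_1\Vgxi=(\lambda-1)\Vgxi$, since the $\MU_2\Vgh$ terms have dropped out. This is exactly the statement that $\Vgxi$ is an eigenvector of the $p\times p$ matrix $\boldsymbol{\Sigma}_1\MV_1^*\MU_1$ with eigenvalue $\lambda-1$. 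For the converse, given an eigenpair $(\mu,\Vgxi)$ of $\boldsymbol{\Sigma}_1\MV_1^*\MU_1$, I would set $\Vx=\MU_1\Vgxi$ (nonzero because $\MU_1$ has full column rank and $\Vgxi\neq\Vzero$) and verify directly that $\MA\Vx=(1+\mu)\Vx$ by reversing the computation; note $\MV_1^*\MU_1\Vgxi$ need not vanish but $\boldsymbol{\Sigma}_1\MV_1^*\MU_1\Vgxi=\mu\Vgxi$ is all that is needed.

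I would then add one sentence drawing the advertised conclusion: combining Theorems~\ref{teo:null} and~\ref{teo:range}, the eigenvalues of $\MI+\MK$ are the value $1$ (with eigenspace $R(\MV_2)$ of dimension $n-p$) together with the $p$ numbers $1+\mu_i$, where $\mu_i$ are the eigenvalues of $\boldsymbol{\Sigma}_1\MV_1^*\MU_1$. Only these latter $p$ eigenvalues, plus possibly the eigenvalue $1$ itself, can be ill-conditioned; a short perturbation-theoretic count (the perturbed matrix $\MI+\MK+\ME$ moves at most $p$ eigenvalues away from a neighborhood of $1$, and the eigenvalue $1$ can split) yields the bound of at most $2p$ sensitive eigenvalues claimed in the introduction.

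I do not expect a genuine obstacle here: the argument is a clean change of basis, and the only thing to be careful about is bookkeeping — keeping the $\MU_1$/$\MU_2$ components separate, confirming $\Vx=\MU_1\Vgxi\neq\Vzero$ in the converse, and being explicit that the theorem tacitly concerns $\lambda\neq1$ (or else simply observing that for $\lambda=1$ the eigenvector lies in $R(\MV_2)$, so the two theorems together partition the spectrum). The mild subtlety worth a remark is that $\MV_1^*\MU_1$ is in general neither symmetric nor normal, so the eigenvalues $\mu_i$ of $\boldsymbol{\Sigma}_1\MV_1^*\MU_1$ — and hence the eigenvalues $1+\mu_i$ of $\MI+\MK$ — can be poorly conditioned even though $\MK$ itself has a perfectly conditioned SVD; this is precisely the phenomenon the section is after.
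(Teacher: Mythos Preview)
Your proposal is correct and uses essentially the same approach as the paper: both exploit the orthonormality of the columns of $\MU_1$ to pass between the $n\times n$ eigenvalue problem for $\MA$ and the $p\times p$ problem for $\boldsymbol{\Sigma}_1\MV_1^*\MU_1$. The paper's proof is slightly more direct --- it simply left-multiplies the eigenvalue equation $\MU_1\boldsymbol{\Sigma}_1\MV_1^*\MU_1\Vgxi=(\lambda-1)\MU_1\Vgxi$ by $\MU_1^*$ rather than introducing the full $\MU_1/\MU_2$ block decomposition --- but your extra step of separating out the $\MU_2$-component is harmless and in fact recovers the pre-theorem remark that eigenvectors with $\lambda\neq1$ lie in $R(\MU_1)$.
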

\begin{proof}
  Let $\Vx = \MU_1\boldsymbol{\xi}$ be an eigenvector of $\MI+\MU_1\boldsymbol{\Sigma}_1\MV_1^*$.  
  Then  $\MU_1\boldsymbol{\Sigma}_1\MV_1^*\MU_1\boldsymbol{\xi} = \gamma\MU_1\boldsymbol{\xi}$, with $\gamma = \lambda - 1$. Left multiplying by $\MU_1^*$ gives 
  $\boldsymbol{\Sigma}_1\MV_1^*\MU_1\boldsymbol{\xi} = \gamma\boldsymbol{\xi}$,
  and so $\boldsymbol{\xi}$ is an eigenvector of $\boldsymbol{\Sigma}_1\MV_1^*\MU_1$ with eigenvalue $\gamma = \lambda - 1$.  
  Now assume 
  $\boldsymbol{\Sigma}_1\MV_1^*\MU_1\boldsymbol{\xi} = \gamma\boldsymbol{\xi}$ for $\Vxi\ne \Vzero$.
  Then  
  $\MU_1\boldsymbol{\Sigma}_1\MV_1^*(\MU_1\boldsymbol{\xi}) = \gamma(\MU_1\boldsymbol{\xi})$,
  which is equivalent to (\ref{eq:evalA}) with
    $\lambda - 1 = \gamma$ and $\Vx = \MU_1\boldsymbol{\xi}$.    
\end{proof}

Suppose $\Vxi\ne \Vzero$ is an eigenvector of $\MSigma_1\MV_1^*\MU_1$ corresponding to eigenvalue $\gamma$.
First, consider the case that $\boldsymbol{\Sigma_1}\MV_1^*\MU_1$ is diagonalizable. We need to distinguish between $\gamma = 0$ and $\gamma\not=0$. 
If $\c\neq 0$, $\MU_1\boldsymbol{\xi}$ is an eigenvector of $\MA$ with eigenvalue $\l =\c+1$. 
When $\c=0$, $\l=1$ and $\MSigma_1\MV_1^*\MU_1\Vxi=\Vzero$ implies $\MV_1^*\MU_1\Vxi=\Vzero$, and hence $\MU_1\boldsymbol{\xi} \in R(\MV_2)$ is not a `new' eigenvector of $\MA$. In this case, 
$\MV_1\boldsymbol{\Sigma}_1^{-1}\boldsymbol{\xi}$ is a generalized eigenvector of order two with defective eigenvalue $\l=1$:
$$(\MA - \MI)^2 \MV_1 \boldsymbol{\Sigma}_{1}^{-1}
\boldsymbol{\xi} 
= (\MA-\MI)\MU_1\boldsymbol{\Sigma}_1\MV_1^*
\MV_1\boldsymbol{\Sigma}_1^{-1}\boldsymbol{\xi}
=
(\MA-\MI)(\MU_1\boldsymbol{\xi}) = \Vzero,$$
with $\MU_1\boldsymbol{\xi} \neq \Vo$.
Such defective eigenvalues are sensitive 
to perturbations~\cite{MBO97}. 
Let $\ell$ denote the multiplicity of $\c=0$.
Then we have $\ell$ eigenvectors $\MU_1\boldsymbol{\xi} \in R(\MV_2)$ and $\ell$ generalized eigenvectors of order two as above, $p - \ell$ eigenvectors of type $\MU_1\boldsymbol{\xi}$ (with $\c\neq 0$), in addition to $n-p -\ell$  further eigenvectors in $R(\MV_2)$ for $\l=1$.  

If $\boldsymbol{\Sigma}_1\MV_1^*\MU_1$ is not diagonalizable and $\gamma\not=0$, 
then a generalized eigenvector $\boldsymbol{\xi}$ of order $k$ corresponding to $\gamma$ corresponds to a generalized eigenvector $\MU_1 \boldsymbol{\xi}$ of the same order corresponding to $\lambda = \gamma+1$ of $\MA$. This follows directly from the observation that 
$(\MU_1\boldsymbol{\Sigma}_1\MV_{1}^* - \gamma \MI_n)\MU_1 = \MU_1(\boldsymbol{\Sigma}_1\MV_{1}^*\MU_1 - \gamma \MI_p)$ and hence 
$\MU_1(\boldsymbol{\Sigma}_1\MV_{1}^*\MU_1 - \gamma \MI_p)^{k}\boldsymbol{\xi} = \Vo \Leftrightarrow 
(\MA - (\gamma+1) \MI_n)^{k}\MU_1\boldsymbol{ \xi} = \Vo$.
Again, defective eigenvalues are highly sensitive to 
perturbations~\cite{MBO97}.  

As a result, we get up to $p$ sensitive eigenvalues from small angles between the eigenvectors in $R(\MU_1)$, defective eigenvalues corresponding to $\c=0 \Leftrightarrow \l=1$, or 
$\boldsymbol{\Sigma}_1\MV_1^*\MU_1$
being non-diagonalizable.
Another up to $p$ potentially sensitive eigenvalues arise from small angles between  
eigenvectors 
$\MU_1\boldsymbol{\xi}$ (with $\lambda \not = 1$)
and $R(\MV_2)$, as any vector
$\MV_2\boldsymbol{\zeta}$ is an eigenvector (with $\lambda = 1$), and  from
small angles between eigenvectors 
$\MU_1\boldsymbol{\xi}$ and the vectors 
$\MV_1 \boldsymbol{\Sigma}_1^{-1} \boldsymbol{\xi}$ corresponding to 
eigenvalues $\gamma = 0$. 

If $\boldsymbol{\Sigma}_1 \MV_1^* \MU_1$ is not diagonalizable and $\c=0$, the results above show that 
a Jordan chain of lengths $s$ for $\MSigma_1\MV_1^*\MU_1$ with eigenvalue $\c = 0$, $\boldsymbol{\xi}_1, \ldots, \boldsymbol{\xi}_s$ (the subscript indicating the order) gives a Jordan chain of $\MA$ (with eigenvalue $\l=1$) of length $s+1$ given by 
$\MU_1\boldsymbol{\xi}_1,  
\MV_1\boldsymbol{\Sigma}_1^{-1}\boldsymbol{\xi}_1, \ldots,
\MV_1\boldsymbol{\Sigma}_1^{-1}\boldsymbol{\xi}_s$.

We have identified sources of sensitive eigenvalues for matrices
taking the form (\ref{eq:Amat}).  
Next, we give two small
examples, each highlighting how these sources of sensitivity 
can affect the convergence of GMRES.  

\paragraph*{Example 1.} To visualize the effect of the sensitive eigenvalues of $\boldsymbol{\Sigma_1}\MV_1^*\MU_1$ on GMRES
convergence for matrices of type 
(\ref{eq:Amat}), we consider an example where
$\boldsymbol{\Sigma_1}\MV_1^*\MU_1$ has two ill-conditioned eigenvalues. We let the
matrix size be $n = 25$ and $p={\rm rank}(\MK) = 5$. The sensitive eigenvalues of $\MK$ are
$\gamma_1 \approx 32.62$ and $\gamma_2 \approx -30.62$, with corresponding sensitive eigenvalues of $\MA$, 
$\lambda_1 \approx 33.62$ and $\lambda_2 \approx -29.62$.  We show in
Figure \ref{fig:condNumSmall} that the matrix has two
sensitive eigenvalues (i.e., those shown with large condition number).
Figure \ref{fig:smallEigAll} shows pseudospectra of $\MA$. 
We construct $\ME_1$ and $\ME_2$ such that
$\2nm{\ME_1} = 10^{-3}$ and $\2nm{\ME_2} = 10^{-1}$ and run
GMRES to solve the linear systems $\MA_{1,2}\Vx = \Vb$, for $\MA_1 = \MI+\MK+\ME_1$ and 
$\MA_2 = \MI+\MK+\ME_2$. Here, we let $\Vb$ be a random vector generated by MATLAB's {\tt rand} function. For the unperturbed system, GMRES
converges in 4 iterations. For the system with coefficient matrix $\MA_1$, GMRES converges in 7
iterations, and for $\MA_2$, iterations further increase to 14.
In fact, even for a very small perturbation $\ME_3$ with
$\2nm{\ME_3} = 10^{-6}$, GMRES converges in 6 iterations. 

\begin{figure}[h!]
\centering
\begin{subfigure}{.3\textwidth}
\vspace{12mm}
\centering
\includegraphics[width = 5.5cm]{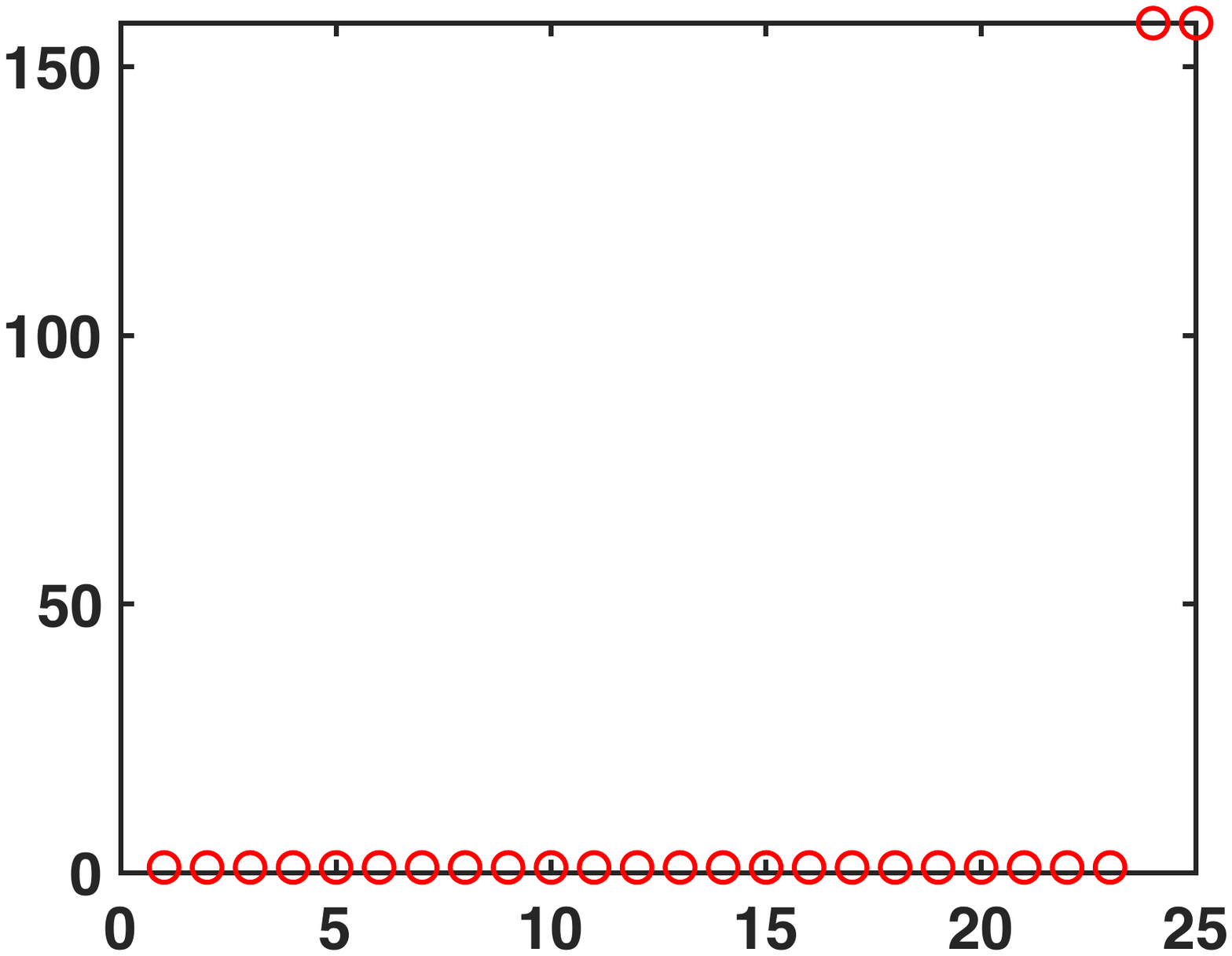}
\centering
\subcaption{Condition number of the eigenvalues.}
\label{fig:condNumSmall}
\end{subfigure}
\hspace{1cm}
\begin{subfigure}{.5\textwidth}
\includegraphics[width = 10cm]{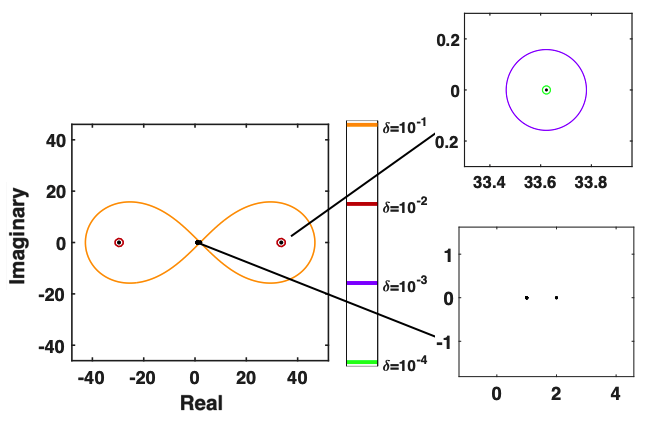}
\subcaption{Pseudospectra of $\MI+\MK$. The color bar gives
$\delta$ corresponding to $\sigma_\delta(\MI+\MK)$.}
\label{fig:smallEigAll}
\end{subfigure}
\caption{$\MI+\MK$ with dimension $n = 25$, rank$(\MK) = 5$, 
and two ill-conditioned eigenvalues of $\boldsymbol{\Sigma}_1\MV_1^*\MU_1$. }
\label{fig:illEig}
\end{figure}

\paragraph*{Example 2.} Next, we analyze a matrix with very small angles between 
$R(\MU_1)$ and 
$R(\MV_2)$. Specifically, we let the matrix size be 
$n = 25$ and let $p={\rm rank}(\MK) = 5$.  We construct $\MK$ such that
there are two small angles, $\theta_{1,2} = 10^{-5}$, between
$R(\MU_1)$ and $R(\MV_2)$.
We see in Figure \ref{fig:condNum} that the unperturbed matrix
we construct has two sensitive eigenvalues. Figure \ref{fig:angles} shows pseudospectra of $\MA$.\ \ We choose $\ME_1$ and $\ME_2$ such that $\2nm{\ME_1} = 10^{-5}$
and $\2nm {\ME_2} = 10^{-2}$ and consider GMRES iterations for
$\MA_1 = \MI+\MK+\ME_1$ and $\MA_2 = \MI+\MK+\ME_2$. For the
unperturbed system, GMRES converges in 6
iterations. For the perturbed system corresponding to
$\MA_1$, iterations increase slightly to 7, and for $\MA_2$
iterations increase to 10.

\begin{figure}[h!]
\begin{subfigure}{.3\textwidth}
\includegraphics[width = 5.6cm]{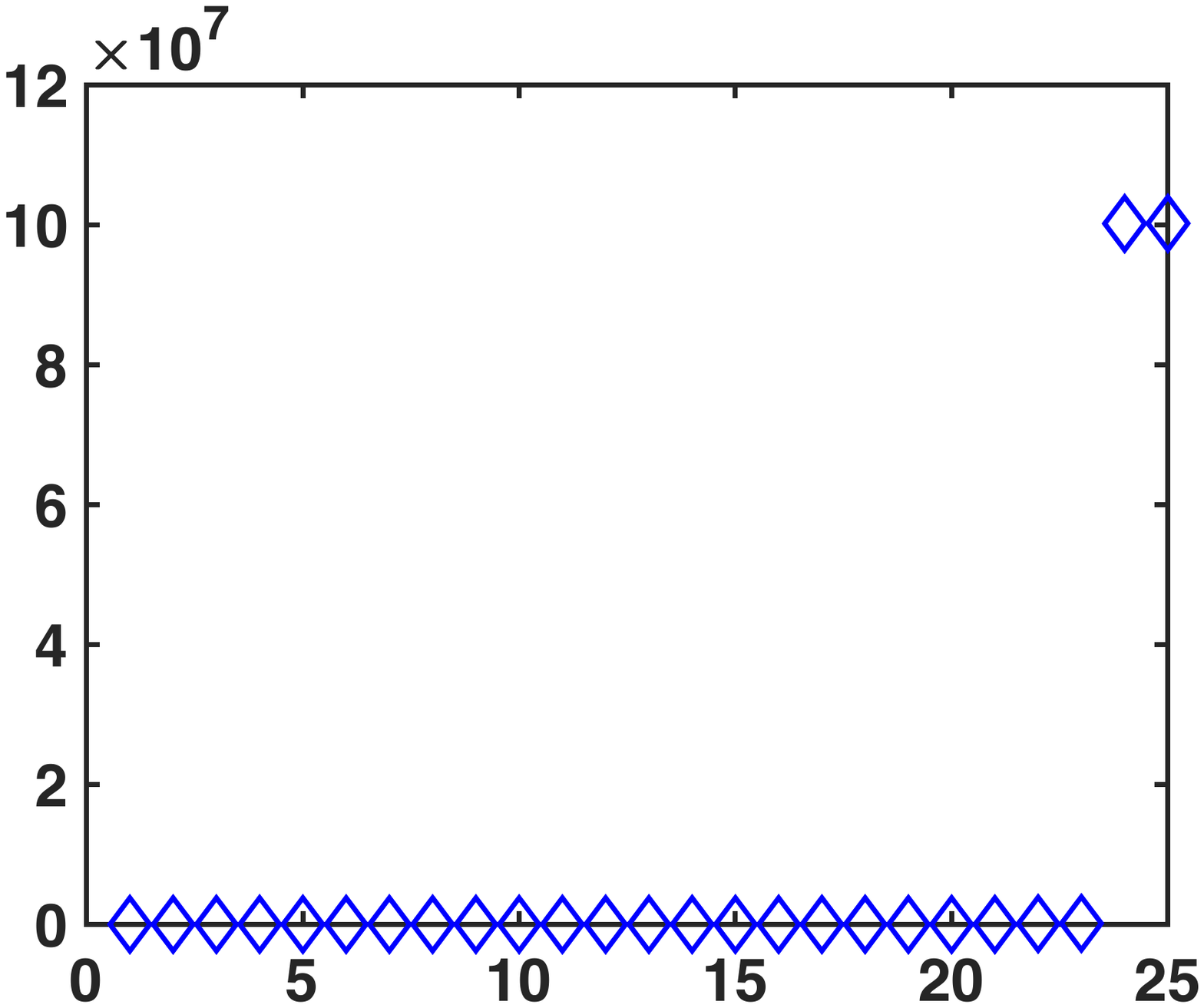}
\vspace{1mm}

\subcaption{Condition number of the eigenvalues.}
\label{fig:condNum}
\end{subfigure}
\hspace{1cm}
\begin{subfigure}{.5\textwidth}
\includegraphics[width = 11cm]{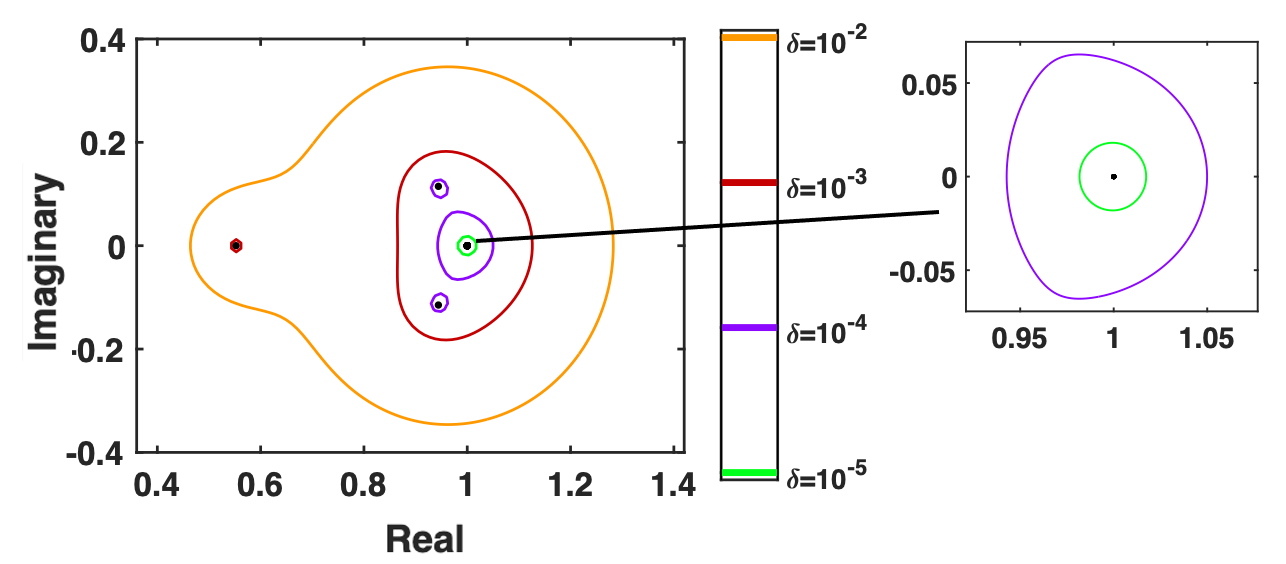}
\subcaption{Pseudospectra of $\MI+\MK$. The color bar gives
$\delta$ corresponding to $\sigma_\delta(\MI+\MK)$.}
\label{fig:angles}
\end{subfigure}
\caption{$\MI+\MK$ with dimension $n = 25$, rank$(\MK) = 5$, 
and two small angles between $\MU_1$ and $\MV_2$.}
\label{fig:closeAngles}
\end{figure}

\section{Experimental Results}\label{sec:results}
We consider the solution of the nonlinear PDE
\eqs \label{eq:nlpde}
  -\nabla\cdot(\nabla u)+(1+u)(70u_x + 70u_y) & = & f  \quad\mbox{on}\quad (0,1)\times (0,1)  ,     \\
  \nonumber
  u & = & 0 \quad \mbox{for} \quad
  x = 0, x = 1 \mbox{ or } y = 0, y = 1 ,
\eqe
on the unit square, where $f$ is chosen so that the solution satisfies
$u(x,y) = y\sin(\p  y)(1-x)\sin(\p x)e^{4x}$. We use a 
finite difference discretization with second order central differences and mesh width 
$h = 1/201$. Discretization leads to the discrete system 
$\MF(\Vu) = \MA \Vu + (\MD(\Vu)+\MI)(70\MD_x + 70\MD_y)\Vu - \Vf = \Vo$, where $\MD(\Vx)$ denotes the diagonal matrix with the coefficients of $\Vx$ on the diagonal, $\MA$ is the discretized diffusion operator, and $\MD_x, \MD_y$ are the convection operators in the $x$ and $y$ directions.
Our initial solution is $\Vu=\Vo$, and hence the initial Jacobian is $\MJ_0 = \MA + \MD((70\MD_x + 70\MD_y)\Vu) + (\MI+\MD(\Vu))(70\MD_x + 70\MD_y)$. 

In general, realistic systems are three dimensional and too large for a direct solver. Thus we consider a very good preconditioner, $\MP$, but not an exact factorization (as, e.g., suggested in \cite{Kelley2003}). The high cost of a very good preconditioner is amortized over multiple linear solves. We use Broyden's method with GMRES for the preconditioned nonlinear system $\MP \MF(\Vu) = 
\Vo$ with initial approximation $\Vu=\Vo$, which results in the initial preconditioned Jacobian $\MB_0 = \MP \MJ_0 = \MI + \ME$,
where $\| \ME \|_2 = \e < 1$.
This leads to a sequence of linear systems of the form $(\MI+\MK+\ME)\Vx = \Vb$, where the rank of $\MK$ increases with each nonlinear iteration. 
We use ILUT \cite{Saad_ILUT94} with drop tolerance $10^{-4}$, which gives $\|\ME\|_2 \approx 0.4$. In practice, we estimate $\|\ME\|_2$ using information from GMRES in the first Broyden step.\ \ 
The line search uses 3-point parabolic interpolation and the Armijo condition \cite{Kelley2003}. Each Broyden step adds a rank-one update. So, at iteration $p$ we solve a system of the form $\MI +\MK + \ME$ with $\mathrm{rank}(\MK)=p$. 

Next, we relate the observed convergence to the analysis provided above, for a realistic $\|\ME\|_2 \approx 0.4$, obtained with an accurate preconditioner. Space limitations prevent us from providing all details, but, for this problem, the analysis shows that the eigenvalues introduced by the low-rank updates are not sensitive and the non-unit eigenvalues $\l$ are very close to the $1+\c_j$ computed from $\boldsymbol{\Sigma}_1\MV_1^*\MU_1$, leading to a marginal increase in the number of GMRES iterations over the Broyden iteration. 
\begin{figure}[!ht]
\begin{center}
  \includegraphics[height=2in]{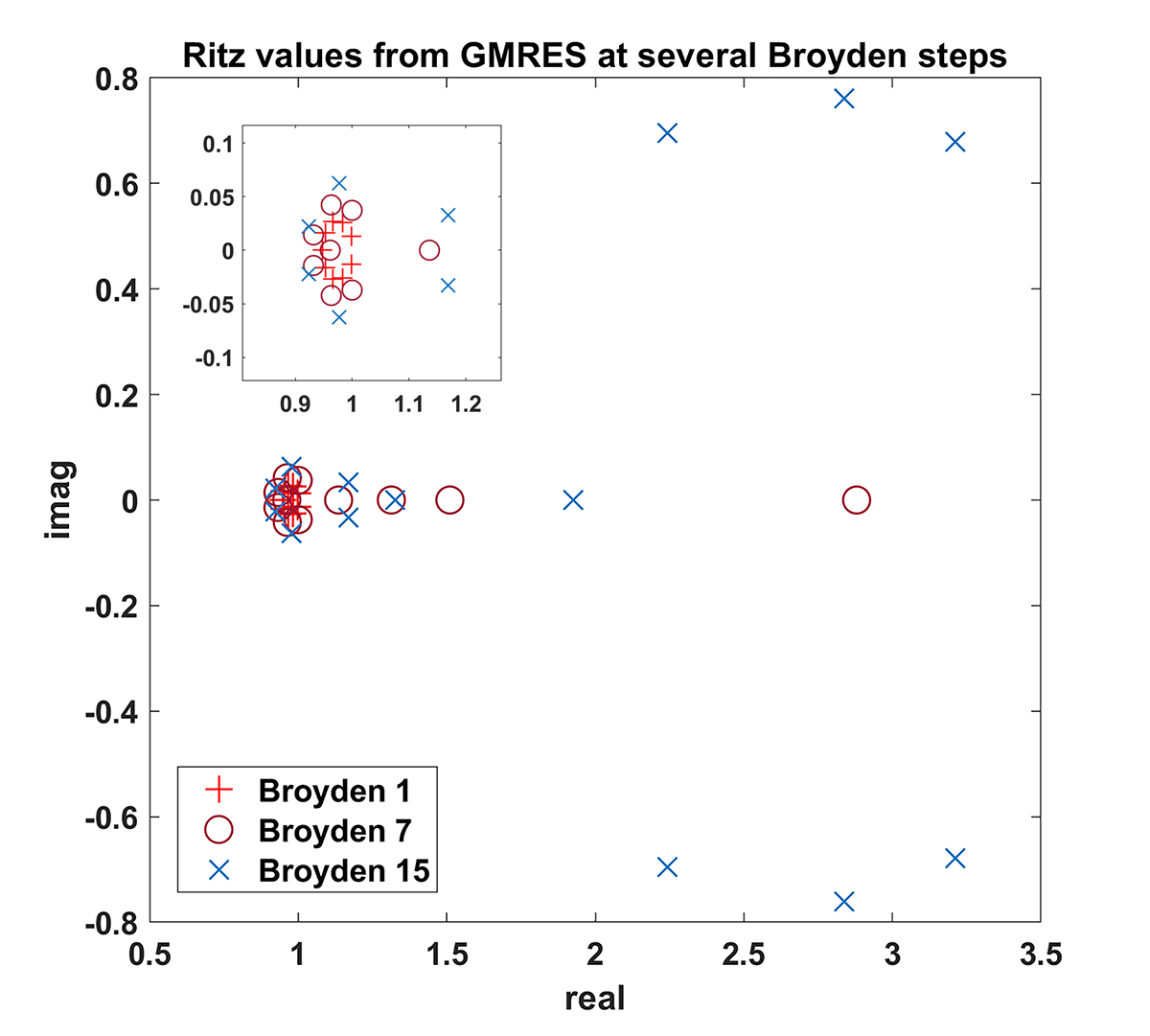}
  \qquad\qquad
  \includegraphics[height=2in]{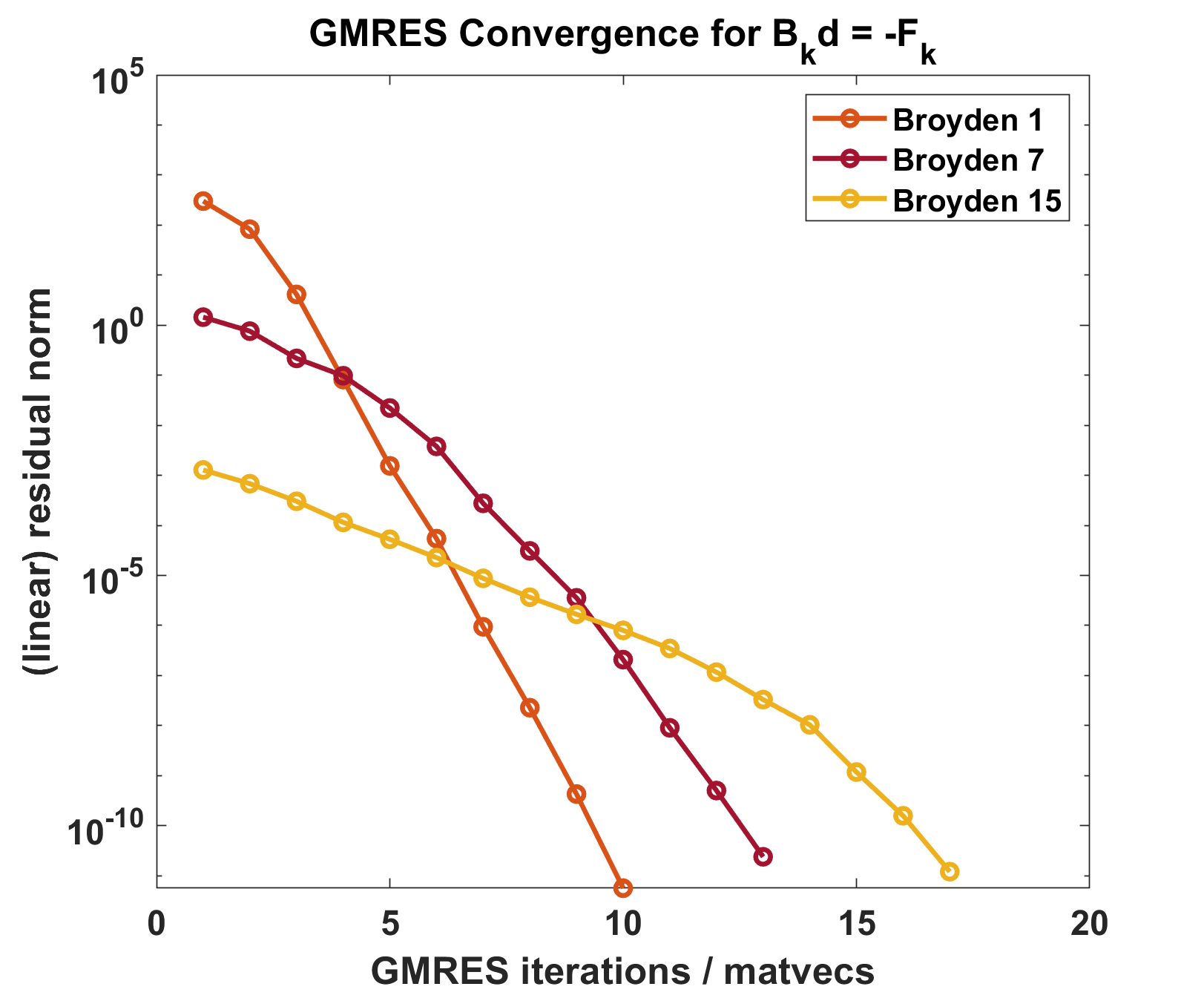}
\end{center}
\caption{
The left figure shows the spectrum of $\MI + \ME$ (first Broyden step) and $\MI + \ME +\MK$ at the 7th and the last Broyden step. The right figure shows the resulting GMRES convergence. Using the analysis above, we could estimate the spectrum and sensitivities from the SVD of $\MK$. }
\end{figure}

\section{Conclusions}\label{sec:conclusion}
In this paper, we characterize GMRES convergence for linear systems with coefficient matrices of the special form $\MI + \MK + \ME$: low rank plus small-in-norm perturbations of the identity matrix. We define a bound for the GMRES residual in terms of the size of $\2nm{\ME}$ by following the theoretical framework of \cite{SifuEmbr13}. We reveal the potentially sensitive eigenvalues of $\MI + \MK$ when introducing small perturbations (i.e., $\ME$), showing that there are no more than $2p$ such eigenvalues, where $p$ denotes the rank of $\MK$.

Several interesting theoretical and practical extensions of the current
paper are the focus of ongoing work, such as considering
$((z-1)\MI-\MK)^{-1}$ directly by way of the Sherman--Morrison--Woodbury formula, and 
when solving long sequences of linear systems of the form 
$(\MI + \MK_j + \ME_j)\Vx_j = \Vb_j$. In particular, we will use this
analysis more extensively to, e.g., determine when to recompute or update
the preconditioner in the Broyden iteration when solving nonlinear PDEs using GMRES.

\vspace{\baselineskip}

\bibliographystyle{pamm}
\bibliography{summaryRef}

\end{document}